\newcommand{\fH}{\mathcal{H}}
\newcommand{\fE}{\mathcal{E}}
\newcommand{\fT}{\mathcal{T}}
\newcommand{\fI}{\mathcal{I}}
\newcommand{\R}{\mathbb{R}}
\newcommand{\Z}{\mathbb{Z}}
\newcommand{\fK}{\mathcal{K}}
\newcommand{\fC}{\mathcal{C}}
\newcommand{\rC}{\operatorname{C}}
\newtheorem{theorem}{Theorem}[section]
\newtheorem{proposition}[theorem]{Proposition}
\newtheorem{lemma}[theorem]{Lemma}
\theoremstyle{definition}
\newtheorem{definition}[theorem]{Definition}
\newtheorem{remark}[theorem]{Remark}
\numberwithin{equation}{section}
\begin{document}
\title[restriction to a hyperbolic cone]{Fourier restriction to a hyperbolic cone}
\author{Benjamin Baker Bruce}
\address{Department of Mathematics, University of Wisconsin, Madison}
\email{bbruce@math.wisc.edu}
\date{\today}
\begin{abstract}
Using a bilinear restriction theorem of Lee and a bilinear-to-linear argument of Stovall, we obtain the conjectured range of Fourier restriction estimates for a conical hypersurface in $\R^4$ with hyperbolic cross sections.
\end{abstract}
\maketitle

\section{Introduction}

In this article, we resolve the Fourier restriction problem for the conical hypersurface
\begin{align*}
\Gamma := \bigg\{\bigg(\zeta,\sigma,\frac{\zeta_1\zeta_2}{\sigma}\bigg) : \zeta \in [-1,1]^2,~\sigma \in [1,2]\bigg\}
\end{align*}
in $\R^4$. In this case, the problem asks, for which exponents $p,q$ is the extension (adjoint restriction) operator
\begin{align*}
\fE f(x,x',t) := \iint_{[-1,1]^2 \times [1,2]} e^{i(x,x',t)\cdot(\zeta,\sigma,\frac{\zeta_1\zeta_2}{\sigma})}f(\zeta,\sigma)d\zeta d\sigma
\end{align*}
of strong type $(p,2q)$?  The restriction problem for the light cone in $\R^4$ was solved by Wolff \cite{Wolff}, while for other conical hypersurfaces, such as those with negatively curved cross sections, it has remained open.  In the case of $\Gamma$, nearly optimal results are known:  Greenleaf \cite{Greenleaf} proved that $\fE$ is of strong type $(p,2q)$ for $p \geq q'$ and $q \geq 2$, and Lee \cite{Lee} extended that range to $q > 3/2$ and $p > q'$.  The main result of this article is the boundedness of $\fE$ on the scaling line  $p = q'$ for $3/2 < q < 2$, solving the remaining part of the restriction problem for $\Gamma$.

\begin{theorem}\label{thm1} The operator $\fE$ is of strong type $(q',2q)$ for $3/2 < q < 2$.
\end{theorem}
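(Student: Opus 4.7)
The plan is to follow the blueprint in the abstract: combine Lee's bilinear restriction theorem for $\Gamma$, which supplies a transversality-based bilinear gain on separated caps, with Stovall's bilinear-to-linear passage, which is designed precisely to reach endpoint restriction estimates on the scaling line. The open range $q > 3/2$, $p > q'$ is already available from Lee's linear theorem, so the only task is to pick up the scaling endpoint $p = q'$.

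Concretely, I would perform a Whitney-type decomposition of $[-1,1]^2 \times [1,2]$ into dyadic caps $\{\tau\}$ and, at each scale $2^{-k}$, organize pairs $(\tau,\tau')$ whose centers are $\sim 2^{-k}$-separated in the transversality metric adapted to $\Gamma$. Writing $f = \sum_\tau f_\tau$ and expanding
\begin{align*}
|\fE f(x)|^2 = \sum_k \sum_{(\tau,\tau') \sim_k} \fE f_\tau(x)\,\overline{\fE f_{\tau'}(x)},
\end{align*}
one reduces to controlling each bilinear piece in $L^q$. Lee's bilinear theorem, applied after rescaling to unit scale via the dilation symmetry of $\Gamma$, yields, for some $q > 3/2$,
\begin{align*}
\|\fE f_\tau \cdot \overline{\fE f_{\tau'}}\|_{L^q} \lesssim 2^{-\epsilon k}\|f_\tau\|_{L^2}\|f_{\tau'}\|_{L^2}.
\end{align*}

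In the open range $p > q'$, the gains $2^{-\epsilon k}$ and almost-orthogonality of the $\fE f_\tau$ in frequency let a Tao--Vargas--Vega-style geometric summation recover a linear estimate; at $p = q'$ that summation is logarithmically divergent. To close the endpoint, I would reduce strong type $(q',2q)$ to a restricted weak-type inequality of the form $\|\fE \chi_E\|_{L^{2q,\infty}} \lesssim |E|^{1/q'}$, real-interpolated with Lee's open-range bounds, and then follow Stovall's level-set refinement: decompose both $\chi_E$ and the superlevel sets of $\fE \chi_E$ dyadically in height, pick for each pair of levels the optimal Whitney scale $k$ at which the bilinear gain is tight, apply the bilinear estimate there, and sum, using frequency almost-orthogonality of the Whitney pieces to absorb the logarithmic loss.

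The main obstacle is precisely this endpoint balancing: at $p = q'$ the bilinear gain $2^{-\epsilon k}$ is exactly critical, so one must simultaneously exploit the frequency orthogonality of Whitney pieces, sharp Lorentz-space refinement on input and output, and the cone's dilation symmetry to force summability. A secondary technical point is that the cross sections of $\Gamma$ are saddles (the Hessian of $\zeta_1\zeta_2/\sigma$ has mixed signature), so the Whitney transversality geometry is hyperbolic rather than elliptic, and one must verify, uniformly in $k$, that the Whitney pairs satisfy the hypotheses of Lee's bilinear theorem after the scaling reduction, so that the bilinear input to the Stovall machinery is genuinely scale-invariant.
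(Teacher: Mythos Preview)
Your outline has the right shape at the coarsest level---Lee's bilinear estimate plus a Stovall-type passage to the scaling line---but the concrete mechanism you describe is not Stovall's argument and would not close the endpoint as written. Two specific problems. First, the Whitney decomposition for a hyperbolic cross section must be \emph{biparameter}: Lee's bilinear theorem requires separation in both the $\zeta_1$ and the $\zeta_2$ directions, so the caps are tiles $\tau \in \fT_{j,k}$ and the bilinear sum runs over pairs $(j,k)$, not a single scale $k$. A one-parameter Whitney organization cannot match the saddle geometry, and the ``transversality metric adapted to $\Gamma$'' you invoke does not reduce to a single separation parameter. Second, your description of ``Stovall's level-set refinement''---decomposing $\chi_E$ and the superlevel sets of $\fE\chi_E$ in height and optimizing a Whitney scale per level pair---is not what Stovall does; in particular, $\chi_E$ has a single height, so that step is vacuous, and no output-side level-set analysis appears in her argument.

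The actual endpoint mechanism, both in Stovall's paper and here, is structurally different. One reduces to restricted \emph{strong} type and decomposes the input set by \emph{fiber length}: $\Omega = \bigcup_K \Omega(K)$, where $\Omega(K)$ has $\pi_{1,3}$-fibers of length $\sim 2^{-K}$. A Vargas-type biparameter Whitney argument (Proposition~\ref{prop1}) already gives the estimate for each $\Omega(K)$ individually; the endpoint difficulty is summing over $K$. The decisive step is an \emph{inverse theorem} (Proposition~\ref{prop2}): if $\rC(\Omega(K)) = \varepsilon$, then $\Omega(K)$ is, up to a $\delta$-controlled error, covered by $\lesssim \delta^{-C_0}$ boxes $\tilde\tau$ with $\tau \in \fT_{J(K),K}$, for each dyadic $\delta \lesssim \varepsilon^{1/5}$. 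One then proves near-orthogonality of the extensions of these box-like pieces across different $K$ (Lemma~\ref{lem7}) and sums. Neither the fiber-length decomposition nor the inverse structure theorem appears in your proposal, and these are precisely the ingredients that replace the divergent geometric sum at $p=q'$ by a convergent one.
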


Because $\Gamma$ is (a compact piece of) a cone whose cross sections are hyperbolic paraboloids, the slicing argument in \cite{Nicola} shows that a strong type $(p,2q)$ restriction estimate for the hyperbolic paraboloid in $\R^3$ implies the corresponding result for $\Gamma$.  Therefore, by \cite{Stovall} (and the references therein), the estimate in Theorem \ref{thm1} is known for $q > 13/8$ and holds conditionally for smaller $q$, pending further progress on restriction to the hyperbolic paraboloid.  The superior bilinear restriction theory for $\Gamma$, in relation to that of the hyperbolic paraboloid, allows us to prove Theorem \ref{thm1} unconditionally.

\medskip
\noindent {\bf Terminology and notation.}  A positive constant is \emph{admissible} if it depends only on $q$.  We write $A \lesssim B$ or $A = O(B)$ to mean $A \leq CB$ for some admissible constant $C$, which is allowed to change from line to line.  We denote the one-dimensional Hausdorff measure by $\fH^1$.  We write $\log$ for the base $2$ logarithm.  An interval of the form $[n2^{-j},(n+1)2^{-j})$ for some $j,n \in \Z$ is \emph{dyadic}, and $\fI_j$ denotes the set of dyadic intervals of length $2^{-j}$.  The product of two dyadic intervals is a \emph{tile}, and $\fT_{j,k}$ denotes the set of $2^{-j} \times 2^{-k}$ tiles.  Given $\tau \in \fT_{j,k}$, we set $\tilde{\tau} := \tau \times [1,2]$.  We denote by $\pi_{i,3}$ and $\pi_1$, respectively, the projections $(\zeta,\sigma) \mapsto (\zeta_i,\sigma)$ and $(\zeta_i,\sigma) \mapsto \zeta_i$, for $i =1,2$ and $(\zeta,\sigma) \in \R^2 \times \R$.  If $\pi$ is one of these projections and $S$ a subset of the domain of $\pi$, the \emph{$\pi$-projection of $S$} refers to the set $\pi(S)$, and a \emph{$\pi$-fiber of $S$} is any set of the form $\pi^{-1}(\pi(s)) \cap S$ with $s \in S$.  \emph{Horizontal} and \emph{vertical} refer to the directions in $\R^2$ parallel to the standard basis vectors ${\mathbf e_1}$ and ${\mathbf e_2}$, respectively.  Finally, the \emph{extension of a set} refers to the Fourier extension of the set's characteristic function.

\medskip
\noindent {\bf Outline of the proof.}  We adapt an argument of Stovall \cite{Stovall} which showed that, for $3/2 < q < 2$, the extension operator associated to the hyperbolic paraboloid in $\R^3$ is of strong type $(q',2q)$, provided an appropriate $L^{p_0} \times L^{p_0} \rightarrow L^{q_0}$ bilinear restriction inequality holds for some $q_0 < q$ and $p_0/2 < q_0 < p_0'$.  A bilinear estimate suitable for running Stovall's argument on the hypersurface $\Gamma$ is already known:

\begin{theorem}[Lee \cite{Lee}]\label{thm2}
Let $\tau,\kappa \subseteq [-1,1]^2$ be squares with unit separation in both the horizontal and vertical directions.  If $q > 3/2$, then
\begin{align*}
\|\fE f\fE g\|_q \lesssim \|f\|_2\|g\|_2
\end{align*}
for all bounded measurable functions $f$ and $g$ supported in $\tau \times [1,2]$ and $\kappa \times [1,2]$, respectively.
\end{theorem}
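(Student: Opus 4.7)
My approach would follow the induction-on-scales framework of Tao's bilinear extension theorem, adapted to cones with indefinite curvature. The unit horizontal and vertical separation of $\tau$ and $\kappa$ yields the key quantitative transversality: for every pair of points $p \in \Gamma$ over $\tau$ and $p' \in \Gamma$ over $\kappa$, the span of $T_p\Gamma$, $T_{p'}\Gamma$, and the direction of $p-p'$ fills $\R^4$. Exploiting the conic symmetry of $\Gamma$, it is enough to prove, for every $R \gg 1$ and every $\varepsilon > 0$, the localized bound
\begin{align*}
\|\fE f\, \fE g\|_{L^q(B_R)} \lesssim R^\varepsilon \|f\|_2 \|g\|_2
\end{align*}
uniformly in balls $B_R \subseteq \R^4$ of radius $R$, for any fixed $q > 3/2$; the usual epsilon-removal argument then recovers the sharp statement.

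Next I would perform a wave-packet decomposition of $\fE f$ and $\fE g$ at scale $R$. Each packet concentrates on a tube of length $R$ and cross-section $R^{1/2}$, oriented along the normal to $\Gamma$ over an $R^{-1/2}$-cap, and Plancherel yields $\ell^2$-orthogonality of the coefficients with total mass $\lesssim \|f\|_2^2$ (respectively $\|g\|_2^2$). The critical geometric input is then a bilinear tube-incidence estimate: by transversality, any tube from the $\tau$-family meets any tube from the $\kappa$-family in a set of $4$-volume $O(R^{3/2})$, and such tube pairs have controlled overlap on any ball of radius $R$.

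I would then close the induction by decomposing $\tau$ and $\kappa$ further at an intermediate scale $\rho \in [R^{-1/2},1]$, applying the inductive hypothesis at the subscale to each still-transverse pair of subcubes, and recombining via the bilinear tube bound together with local $L^2$-orthogonality on the physical side. The exponent $q = 3/2$ emerges as the precise critical value at which the $R^{3/2}$ tube-intersection volume balances the $L^2$ mass, so the dyadic summation converges for every $q > 3/2$.

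The main obstacle, and the reason this is genuinely more subtle than Tao's bilinear theorem for elliptic surfaces, is that the cross sections of $\Gamma$ are saddle surfaces, so the second fundamental form has mixed signature. Along the two families of null directions of the hyperbolic paraboloid, wave packets from separated caps can become nearly parallel even though the caps themselves are well separated. One must therefore refine the wave-packet decomposition to track alignment with these asymptotic directions and recover orthogonality along them; making this non-elliptic Kakeya step rigorous is the core of the argument, and it is precisely what is carried out in \cite{Lee}.
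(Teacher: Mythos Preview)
The paper does not contain a proof of Theorem~\ref{thm2}; it is quoted as a black-box input from Lee~\cite{Lee} and used throughout Sections~\ref{sec2}--\ref{sec4}. So there is no ``paper's own proof'' to compare your proposal against.

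That said, your outline is a fair high-level description of the strategy in~\cite{Lee}: wave-packet decomposition, induction on scales, and a bilinear Kakeya-type estimate adapted to the indefinite second fundamental form. You also correctly identify the main difficulty --- that along the null directions of the hyperbolic cross section, packets from separated caps can align --- and that resolving this is the substance of Lee's argument. One quantitative remark: your claim that the intersection of two transverse tubes has $4$-volume $O(R^{3/2})$ is stated for generic $R^{1/2}$-tubes, but for a cone the wave packets are anisotropic (plates rather than tubes, since the radial direction is flat), and the relevant incidence geometry is correspondingly more delicate than a single volume bound. This does not change the overall shape of the argument, but the sketch as written glosses over exactly the place where the cone structure and the mixed signature interact nontrivially. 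Since you already flag this as ``the core of the argument'' and defer to~\cite{Lee}, your proposal is accurate as a summary, though not a self-contained proof.
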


To prove Theorem \ref{thm1}, it suffices to show that $\fE$ is of restricted strong type $(q',2q)$ for every $3/2 < q < 2$.  Thus, we aim to prove that
\begin{align}\label{ex1}
\|\fE \chi_\Omega\|_{2q} \lesssim |\Omega|^\frac{1}{q'}
\end{align}
for an arbitrary measurable set $\Omega \subseteq [-1,1]^2 \times [1,2]$.  In Section \ref{sec2}, we use Theorem \ref{thm2} and a bilinear-to-linear argument of Vargas \cite{Vargas} to show that sets having roughly constant $\pi_{1,3}$- (or $\pi_{2,3}$-) fiber length obey \eqref{ex1}.  In Section \ref{sec3}, we solve a related inverse problem:  For which sets $\Omega$ of constant fiber length can the inequality in \eqref{ex1} be reversed?  Oversimplified, our answer is that $\Omega$ must be a box of the form $\tilde{\tau}$; proving \eqref{ex1} then becomes a matter of bounding the extension of a union of boxes, which we do in Section \ref{sec4}.  Our real answer, however, is quantitative:  We show that $\Omega$ is approximately a union of boxes, where the number of boxes in the union and the tightness of the approximation are controlled by the constant $\rC(\Omega)$, defined thus:
\begin{definition}
For measurable sets $\Omega_1 \subseteq \Omega_2 \subseteq [-1,1]^2 \times [1,2]$, let $\rC(\Omega_1,\Omega_2)$ denote the smallest number $\varepsilon$, either dyadic, zero, or infinite, such that $\|\fE\chi_{\Omega_1'}\|_{2q} \leq \varepsilon|\Omega_2|^{1/q'}$ for every measurable set $\Omega_1' \subseteq \Omega_1$, and let $\rC(\Omega_1) := \rC(\Omega_1,\Omega_1)$.
\end{definition}
\noindent Finally, in Section \ref{sec5}, we start with a generic set $\Omega$, decompose it into sets $\Omega(K)$ of fiber length roughly $2^{-K}$, sorted thence according to the value of $\rC(\Omega(K))$, and apply the restriction estimates of Sections \ref{sec3} and \ref{sec4} to obtain \eqref{ex1}.

While much of our argument resembles Stovall's in \cite{Stovall}, we include full details for the convenience of the reader.

\medskip
\noindent {\bf Acknowledgments.}  The author thanks Sanghyuk Lee for introducing him to this problem and Betsy Stovall for her advice.  This work was supported by National Science Foundation grants DMS-1653264 and DMS-1147523.

\section{Extensions of sets of constant fiber length}\label{sec2}
In this section, we prove a scaling line restriction estimate for characteristic functions of sets of constant $\pi_{1,3}$-fiber length, arguing \`a la Vargas \cite{Vargas}.  By symmetry, the same estimate then holds for sets of constant $\pi_{2,3}$-fiber length.
\begin{definition}\label{def2}
Given a measurable set $\Omega \subseteq [-1,1]^2 \times [1,2]$ and an integer $K \geq 0$, let
\begin{align*}
\Omega(K) := \{(\zeta,\sigma) \in \Omega : \fH^1(\pi_{1,3}^{-1}(\zeta_1,\sigma) \cap \Omega) \sim 2^{-K}\}.
\end{align*}
\end{definition}

\begin{proposition}\label{prop1}
Suppose that $\Omega = \Omega(K)$ for some $K$.  Then $\rC(\Omega) \lesssim 1$.
\end{proposition}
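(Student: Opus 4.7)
My plan is to derive Proposition \ref{prop1} from Lee's bilinear inequality (Theorem \ref{thm2}) by the bilinear-to-linear strategy of Vargas. The starting point is the identity $\|\fE\chi_{\Omega'}\|_{2q}^2 = \|\fE\chi_{\Omega'}\overline{\fE\chi_{\Omega'}}\|_q$, so it suffices to bound $\|\fE\chi_{\Omega'}\overline{\fE\chi_{\Omega'}}\|_q$ by $|\Omega|^{2/q'}$. Because Theorem \ref{thm2} demands tile pairs separated simultaneously in both the horizontal and vertical directions, I will carry out a bi-Whitney decomposition: for each pair of scales $(j_1,j_2)\in\Z_{\geq 0}^2$, let $\mathcal{W}_{j_1,j_2}$ denote the collection of pairs $(\tau,\kappa)\in\fT_{j_1,j_2}\times\fT_{j_1,j_2}$ whose centers are separated by $\sim 2^{-j_1}$ horizontally and $\sim 2^{-j_2}$ vertically. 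Writing $\Omega'_\tau:=\Omega'\cap\tilde\tau$, every pair of distinct generic points in $[-1,1]^2$ lies in a unique $\mathcal{W}_{j_1,j_2}$, so almost everywhere
$$\fE\chi_{\Omega'}\overline{\fE\chi_{\Omega'}} \;=\; \sum_{j_1,j_2\geq 0}\,\sum_{(\tau,\kappa)\in\mathcal{W}_{j_1,j_2}}\fE\chi_{\Omega'_\tau}\,\overline{\fE\chi_{\Omega'_\kappa}}.$$

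For each $(\tau,\kappa)\in\mathcal{W}_{j_1,j_2}$, I apply the anisotropic rescaling $\zeta_i\mapsto 2^{-j_i}\zeta'_i$ together with the matching dual rescaling $x_i\mapsto 2^{j_i}\tilde x_i$, $t\mapsto 2^{j_1+j_2}\tilde t$, $x'\mapsto\tilde x'$. This change of variables preserves the functional form of $\fE$, converts $(\tau,\kappa)$ into a pair of unit squares with unit separation, and reduces the bilinear estimate to Theorem \ref{thm2} on the rescaled configuration. Accounting for the Jacobians yields
$$\|\fE\chi_{\Omega'_\tau}\,\fE\chi_{\Omega'_\kappa}\|_q \;\lesssim\; 2^{(j_1+j_2)(2-q)/q}\,|\Omega'_\tau|^{1/2}\,|\Omega'_\kappa|^{1/2},$$
where the scale factor $2^{(j_1+j_2)(2-q)/q}>1$ (since $q<2$) reflects that the anisotropic rescaling sits below its scaling exponent.

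The main task, and the principal obstacle, is summing these bilinear estimates over $\mathcal{W}_{j_1,j_2}$ and over $(j_1,j_2)$ to obtain $|\Omega|^{2/q'}$. The hypothesis $\Omega=\Omega(K)$ enters through the sharp size bound $|\Omega'_\tau|\leq 2^{-j_1}\min(2^{-j_2},2^{-K})$, which is precisely what is needed to absorb the growing scale factor: the product $2^{(j_1+j_2)(2-q)/q}(|\Omega'_\tau||\Omega'_\kappa|)^{1/2}$ then decays geometrically in $j_2$ once $j_2\gg K$ and in $j_1$ at all scales. After $L^q$-triangle inequality (valid since $q>1$) and a Cauchy--Schwarz in the $O(1)$-bounded pair multiplicity of $\mathcal{W}_{j_1,j_2}$, the remaining sum should telescope via a geometric series against a column-counting bound on $\{\tau:\Omega'_\tau\neq\emptyset\}$ provided by the fiber-length structure.

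The hardest part is this cross-scale summation: because $q<2$ lies below the threshold for straightforward $L^q$-orthogonality, one must offset the growing bilinear rescaling factor against the mass decay from the fiber-length hypothesis \emph{pair of scales by pair of scales}, without any logarithmic loss in the two-parameter sum over $(j_1,j_2)$. I expect the tight arithmetic between the exponents $(2-q)/q$ (bilinear growth) and the decay rates $2^{-j_1}$ and $\min(2^{-j_2},2^{-K})$ (fiber-length mass) is what makes this balance succeed exactly on the scaling line.
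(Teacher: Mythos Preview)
Your overall strategy---bi-Whitney decomposition, rescaled bilinear estimate, then sum---is the paper's, but the summation as you describe it has two concrete gaps.

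First, the plain $L^q$-triangle inequality on the inner sum over $(\tau,\kappa)\in\mathcal{W}_{j_1,j_2}$ gives only an $\ell^1$ sum of the bilinear norms; after your Cauchy--Schwarz this collapses to $2^{(j_1+j_2)(2-q)/q}|\Omega'|$, and the double sum over $(j_1,j_2)$ diverges. The paper uses instead Lemma~6.1 of Tao--Vargas--Vega (almost orthogonality from essentially disjoint Fourier supports of the products), which for $q<2$ upgrades the inner sum to $\ell^q$ and, together with $\sum_\tau|\Omega'_\tau|\le|\Omega|$, yields the workable quantity $\sum_{j,k}2^{-(j+k)(1-2/q)}\max_\tau|\Omega\cap10\tilde\tau|^{1-1/q}|\Omega|^{1/q}$.

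Second, even granting that step, your size bound $|\Omega'_\tau|\le 2^{-j_1}\min(2^{-j_2},2^{-K})$ is not sharp enough to close the sum. The paper introduces a second scale $J$ via $|\pi_{1,3}(\Omega)|\sim 2^{-J}$ (so $|\Omega|\sim 2^{-J-K}$) and uses
\[
\max_{\tau\in\fT_{j,k}}|\Omega\cap10\tilde\tau|\;\lesssim\;\min(2^{-J},2^{-j})\,\min(2^{-K},2^{-k}).
\]
With only your bound, the $j_1$-series does converge (ratio $2^{(3-2q)/q}<1$), but the total comes out to $|\Omega|^{1/q}2^{K(3-2q)/q}$, which exceeds the target $|\Omega|^{2/q'}$ by the factor $2^{J(2q-3)/q}$---arbitrarily large once $|\pi_{1,3}(\Omega)|$ is small. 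The fiber-length hypothesis controls only the $\zeta_2$-direction and gives no ``column-counting'' in $\zeta_1$; what actually replaces $2^{-j_1}$ by $2^{-J}$ in the regime $j_1\le J$ is the projection bound $|\pi_{1,3}(\Omega\cap\tilde\tau)|\le|\pi_{1,3}(\Omega)|$, and the four-case split $j\lessgtr J$, $k\lessgtr K$ is exactly what makes the two geometric series balance on the scaling line.
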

\begin{proof}
Let $\Omega' \subseteq \Omega$ be measurable. Given $\tau,\kappa \in \fT_{j,k}$, we write $\tau \sim \kappa$ if $\tau$ and $\kappa$ are separated by a distance of $\sim 2^{-j}$ in the horizontal direction and $\sim 2^{-k}$ in the vertical direction.  Up to a set of measure zero, we have
\begin{align*}
([-1,1]^2 \times [1,2])^2 = \bigcup_{j,k}\bigcup_{\substack{\tau,\kappa \in \fT_{j,k}\\ \tau \sim \kappa}} \tilde{\tau} \times \tilde{\kappa}.
\end{align*}
Consequently, by the triangle inequality and Lemma 6.1 in \cite{TVV} (using that $q < 2$),
\begin{align*}
\|\fE \chi_{\Omega'}\|_{2q}^2 \lesssim \sum_{j,k}\bigg(\sum_{\substack{\tau,\kappa \in \fT_{j,k}\\ \tau \sim \kappa}} \|\fE \chi_{\Omega' \cap \tilde{\tau}} \fE\chi_{\Omega' \cap \tilde{\kappa}}\|_q^q\bigg)^\frac{1}{q}.
\end{align*}
By rescaling, Theorem \ref{thm2} implies that
\begin{align*}
\|\fE \chi_{\Omega' \cap \tilde{\tau}} \fE \chi_{\Omega' \cap \tilde{\kappa}}\|_q \lesssim 2^{-(j+k)(1-\frac{2}{q})}|\Omega' \cap \tilde{\tau}|^\frac{1}{2}|\Omega' \cap \tilde{\kappa}|^\frac{1}{2} \leq 2^{-(j+k)(1-\frac{2}{q})}|\Omega \cap \tilde{\tau}|^\frac{1}{2}|\Omega \cap \tilde{\kappa}|^\frac{1}{2}
\end{align*}
for $\tau,\kappa \in \fT_{j,k}$ with $\tau \sim \kappa$.  Given $\tau \in \fT_{j,k}$, there are admissibly many $\kappa$ such that $\tau \sim \kappa$, and for each such $\kappa$, we have (say) $10\tau \supseteq \kappa$.  Thus,
\begin{align}\label{ex2}
\notag\|\fE \chi_{\Omega'}\|_{2q}^2 &\lesssim \sum_{j,k} 2^{-(j+k)(1-\frac{2}{q})}\bigg(\sum_{\tau \in \fT_{j,k}}|\Omega \cap 10\tilde{\tau}|^q\bigg)^\frac{1}{q}\\ &\lesssim \sum_{j,k} 2^{-(j+k)(1-\frac{2}{q})}\max_{\tau \in \fT_{j,k}}|\Omega \cap 10\tilde{\tau}|^{1-\frac{1}{q}}|\Omega|^\frac{1}{q}.
\end{align}
Let $J$ be an integer such that $|\pi_{1,3}(\Omega)| \sim 2^{-J}$.  Then, by Fubini's theorem, $|\Omega| \sim 2^{-J-K}$ and
\begin{align} \label{ex3}
\max_{\tau \in \fT_{j,k}}|\Omega \cap 10\tilde{\tau}| \lesssim \min\{2^{-J},2^{-j}\}\min\{2^{-K},2^{-k}\}.
\end{align}
We split the right-hand side of \eqref{ex2} into four parts:  summation over $j,k$ satisfying (i) $j \leq J$, $k \leq K$; (ii) $j \leq J$, $k > K$; (iii) $j > J$, $k \leq K$; (iv) $j > J$, $k > K$.  Each part is estimated simply by applying \eqref{ex3} and summing a geometric series.  We obtain the desired bound in this way.
\end{proof}

\section{An inverse problem related to Proposition \ref{prop1}}\label{sec3}
In this section, we answer quantitatively the following question:  If $\Omega$ extremizes the inequality in Proposition \ref{prop1}, what structure must $\Omega$ have?	
\begin{proposition}\label{prop2}
Suppose that $\Omega = \Omega(K)$ for some $K$, let $J$ be an integer such that $|\Omega| \sim 2^{-J-K}$, and let $\varepsilon := \rC(\Omega)$.  Up to a set of measure zero, there exists a decomposition
\begin{align*}
\Omega = \bigcup_{0 < \delta \lesssim \varepsilon^{1/5}}\Omega_{\delta},
\end{align*}
where the union is taken over dyadic numbers, such that
\begin{enumerate}[nolistsep]
\item[(i)]{$\rC(\Omega_\delta,\Omega) \lesssim \delta^{1/3}$, and}
\item[(ii)]{$\Omega_{\delta} \subseteq \bigcup_{\tau \in \fT_{\delta}} \tilde{\tau}$, where $\fT_{\delta} \subseteq \fT_{J,K}$ with $\#\fT_{\delta} \lesssim \delta^{-C_0}$ for some admissible constant $C_0$.}
\end{enumerate}
\end{proposition}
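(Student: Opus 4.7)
My plan is to build the decomposition by stratifying the tiles in $\fT_{J,K}$ by the dyadic size of $|\Omega \cap \tilde\tau|$, bounding the extension of each stratum via a refinement of the bilinear argument of Proposition~\ref{prop1}, and then regrouping the strata into dyadic $\delta$-pieces so that conditions (i) and (ii) balance.

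For each dyadic $\lambda \in (0, 1]$, set $\fT^\lambda := \{\tau \in \fT_{J,K} : |\Omega \cap \tilde\tau| \sim \lambda \cdot 2^{-J-K}\}$, $\Omega^\lambda := \Omega \cap \bigcup_{\tau \in \fT^\lambda}\tilde\tau$, and $N_\lambda := \#\fT^\lambda$, so that $\Omega = \bigsqcup_\lambda \Omega^\lambda$ up to measure zero and $\sum_\lambda \lambda N_\lambda \lesssim 1$. Rerunning the bilinear argument of Proposition~\ref{prop1} with the factors $|\Omega \cap 10\tilde\tau|$ and $|\Omega|$ in \eqref{ex2}--\eqref{ex3} replaced by their $\Omega^\lambda$-analogues, and using the improved pointwise estimate
\[
\max_{\tau \in \fT_{j,k}} |\Omega^\lambda \cap 10\tilde\tau| \lesssim \lambda \cdot 2^{-J-K} \min\!\bigl(N_\lambda,\, 2^{(J-j)_+ + (K-k)_+}\bigr),
\]
the same four-quadrant geometric-series computation yields, for every $\Omega' \subseteq \Omega^\lambda$,
\[
\|\fE\chi_{\Omega'}\|_{2q} \lesssim \lambda^{1/2} N_\lambda^{(q-1)/q} \cdot |\Omega|^{1/q'},
\]
and the same bound persists when $\fT^\lambda$ is replaced by any sub-collection and $N_\lambda$ by that sub-collection's cardinality.

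To form the $\Omega_\delta$, for each $\lambda$ I partition $\fT^\lambda$ into disjoint batches of cardinality $\min(N_\lambda, \delta^{-C_0})$ with $C_0 := q/(6(q-1))$, assigning each batch to $\Omega_\delta$ where $\delta$ is the smallest dyadic value with $\lambda^{1/2}(\text{batch size})^{(q-1)/q} \lesssim \delta^{1/3}$; a direct exponent check confirms that this assignment is consistent with the choice of $C_0$. Condition (i) is then automatic. Condition (ii) follows because each batch has at most $\delta^{-C_0}$ tiles, the $\lambda$-levels contributing to a common $\delta$ form a controlled geometric progression (via $\sum_\lambda \lambda N_\lambda \lesssim 1$), and distinct $\lambda$-levels contribute disjoint tile sets.

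Finally, the cutoff $\delta \lesssim \varepsilon^{1/5}$ follows from combining the hypothesis $\rC(\Omega) = \varepsilon$ with a lower bound on the extension of each batch. A single-tile lower bound comes from integrating $|\fE\chi_{\Omega \cap \tilde\tau}|$ over its coherent dual box---of dimensions $2^J \times 2^K \times 1 \times 1$ in the $(x, x', t)$-coordinates, where the $|t| \lesssim 1$ restriction comes from the $\partial^2_\sigma$-contribution $2t\zeta_1\zeta_2/\sigma^3$ to the phase for generic tile centers $(a_1, a_2)$---giving $|\fE\chi_{\Omega \cap \tilde\tau}| \gtrsim \lambda \cdot 2^{-J-K}$ on that box. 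Aggregating these single-tile lower bounds across a batch via quasi-orthogonality on a common dual slab, and matching with the constraint $\rC(\Omega_\delta, \Omega) \leq \varepsilon$, yields the stated threshold after an exponent chase. I expect this final step---upgrading the single-tile coherent-box estimate to a batch lower bound sharp enough to extract the exponent $1/5$---to be the main technical obstacle, since it depends delicately on how the coherent boxes of distinct tiles overlap in the dual space.
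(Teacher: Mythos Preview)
Your approach has two genuine gaps.

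\textbf{The batching scheme violates condition (ii).} With $C_0 = q/(6(q-1))$ you have $C_0(q-1)/q = 1/6$, so the consistency equation $\delta^{1/3} \sim \lambda^{1/2}(\delta^{-C_0})^{(q-1)/q}$ forces $\delta \sim \lambda$ whenever batching is active. But then \emph{every} batch from level $\lambda$ is assigned to the same $\delta \sim \lambda$, and their union in $\Omega_\delta$ carries all $N_\lambda$ tiles. Since $N_\lambda$ can be as large as $\lambda^{-1} \sim \delta^{-1}$ (from $\sum_\lambda \lambda N_\lambda \lesssim 1$) while $C_0 \in (1/3,1/2)$ for $q \in (3/2,2)$, you get $\#\fT_\delta \sim \delta^{-1} \gg \delta^{-C_0}$. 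Concretely, take $\Omega = E \times [0,2^{-K}] \times [1,2]$ with $E$ a Cantor-type set of measure $2^{-J}$ meeting $\sim 2^J$ intervals of $\fI_J$ each with density $\sim 2^{-J}$: then $\lambda \sim 2^{-J}$, $N_\lambda \sim 2^J$, and your scheme would need $2^J$ tiles in a single $\Omega_\delta$ with $\delta \sim 2^{-J}$. The ``direct exponent check'' only verifies that a single batch fits; it does not control the union.

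\textbf{The cutoff $\delta \lesssim \varepsilon^{1/5}$ cannot come from your lower bound.} For a tile $\tau$ with generic center, the coherent box you describe has volume $\sim 2^{J+K}$, giving
\[
\|\fE\chi_{\Omega\cap\tilde\tau}\|_{2q} \gtrsim \lambda\,2^{-J-K}\,(2^{J+K})^{1/(2q)} = \lambda\,|\Omega|^{1/q'}\,(2^{-J-K})^{1/(2q)},
\]
so the implied lower bound on $\rC(\Omega\cap\tilde\tau,\Omega)$ carries an uncontrolled factor $(2^{-J-K})^{1/(2q)}$ and says nothing about $\varepsilon$. More fundamentally, $\varepsilon = \rC(\Omega)$ is a \emph{supremum} over subsets: it guarantees one near-extremal subset, not that every batch has extension $\gtrsim \varepsilon|\Omega|^{1/q'}$. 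No amount of quasi-orthogonality across batches recovers a per-batch lower bound.

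The paper's route is entirely different and avoids both issues. It runs a five-step iterated decomposition based on \emph{projection densities}, not tile densities: first by the $\pi_1$-fiber length of $\pi_{1,3}(\Omega)$ (parameter $\alpha$), then by the dyadic-interval density of the $\zeta_1$-projection (parameter $\eta$), then by $\pi_{2,3}$-fiber length ($\rho$), then by the analogous two steps in $\zeta_2$ ($\beta,\delta$). The cap $\delta \lesssim \varepsilon^{1/5}$ arises structurally from the nesting $\delta \le \beta \le \rho \lesssim \eta^{1/5} \le \alpha^{1/5} \le \varepsilon^{1/5}$, where the top cap $\alpha \le \varepsilon$ is imposed simply by lumping all $\Omega_\alpha^1$ with $\alpha \ge \varepsilon$ together and invoking the \emph{definition} of $\varepsilon$ as an upper bound on $\rC$ of every subset---no lower bound is ever used.
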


\begin{proof}[Proof of Proposition \ref{prop2}]
The construction of the sets $\Omega_{\delta}$ consists of five steps.  We will begin by dividing $\Omega$ into sets $\Omega_\alpha^1$ whose $\pi_{1,3}$-projections have constant $\pi_1$-fiber length $\alpha$, respectively.  That simple step enables us to adapt then the decomposition scheme employed in \cite{Stovall}.  We divide each $\Omega_\alpha^1$ into sets $\Omega_{\alpha,\eta}^2$ whose respective projections to the $\zeta_1$-axis are contained in $\eta^{-1}$ intervals in $\fI_{J}$.  In our third step, we divide each $\Omega_{\alpha,\eta}^2$ into sets $\Omega_{\alpha,\eta,\rho}^3$ of constant $\pi_{2,3}$-fiber length $\rho\eta^{-1}2^{-J}$.  To each $\Omega_{\alpha,\eta,\rho}^3$ we may then apply variants of the first two steps wherein the roles of the coordinates $\zeta_1,\zeta_2$ are reversed.  Indeed, were $\pi_{1,3}$ replaced by $\pi_{2,3}$ in Definition \ref{def2}, each $\Omega_{\alpha,\eta,\rho}^3$ would be of the form $\Omega_{\alpha,\eta,\rho}^3(J+\log(\rho^{-1}\eta))$.  In the end, we obtain sets $\Omega_{\alpha,\eta,\rho,\beta,\delta}^5$ whose respective projections to the $\zeta_2$-axis are contained in $\delta^{-1}$ intervals in $\fI_K$.  For fixed $\delta$, we define $\Omega_\delta$ to be the union of the sets $\Omega_{\alpha,\eta,\rho,\beta,\delta}^5$, of which there will be at most $(\log \delta^{-1})^4$ by construction.  Appearing in the argument below, there are of course constants and minor technical adjustments missing from this summary.

\emph{Step 1.} For each dyadic number $0 < \alpha \leq 1$, define
\begin{align*}
\Omega_\alpha^1 &:= \{(\zeta,\sigma) \in \Omega : \fH^1(\pi_1^{-1}(\zeta_1) \cap \pi_{1,3}(\Omega)) \sim \alpha^A\},
\end{align*}
where $A$ is an admissible constant to be chosen momentarily.
\begin{lemma}\label{lem1}
For every $0 < \alpha \leq 1$, we have $\rC(\Omega_\alpha^1,\Omega) \lesssim \alpha$.
\end{lemma}
\begin{proof}[Proof of Lemma \ref{lem1}]
Let $\Omega' \subseteq \Omega_\alpha^1$ be measurable, and let $J_\alpha$ be an integer such that $|\pi_{1,3}(\Omega_\alpha^1)| \sim \alpha^A2^{-J_\alpha}$.  We record the bound
\begin{align}\label{ex3'}
\alpha^A 2^{-J_\alpha} \lesssim 2^{-J}.
\end{align}   
Following the proof of Proposition \ref{prop1}, we have
\begin{align}\label{ex4'}
\|\fE \chi_{\Omega'}\|_{2q}^2 \lesssim \sum_{j,k}2^{-(j+k)(1-\frac{2}{q})}\max_{\tau \in \fT_{j,k}}|\Omega_\alpha^1 \cap 10\tilde{\tau}|^{1-\frac{1}{q}}|\Omega|^\frac{1}{q}.
\end{align}
By Fubini's theorem,
\begin{align}\label{ex5'}
\notag|\Omega_\alpha^1 \cap 10\tilde{\tau}| &\lesssim |\pi_{1,3}(\Omega_\alpha^1 \cap 10\tilde{\tau})|\min\{2^{-K},2^{-k}\}\\
&\lesssim \alpha^A\min\{2^{-J_\alpha},2^{-j}\}\min\{2^{-K},2^{-k}\}
\end{align}
for every $\tau \in \fT_{j,k}$.  As in the proof of Proposition \ref{prop1}, we split the right-hand side of \eqref{ex4'} into four parts:  summation over $j,k$ satisfying (i) $j \leq J_\alpha$, $k \leq K$; (ii) $j \leq J_\alpha$, $k > K$; (iii) $j > J_\alpha$, $k \leq K$; (iv) $j > J_\alpha$, $k > K$.  Using \eqref{ex5'} and \eqref{ex3'}, we bound the sum corresponding to (i) by
\begin{align*}
\sum_{\substack{j \leq J_\alpha\\k \leq K}}2^{-(j+k)(1-\frac{2}{q})}(\alpha^A2^{-J_\alpha-K})^{1-\frac{1}{q}}|\Omega|^\frac{1}{q} &\sim \alpha^{A(1-\frac{1}{q})}2^{-(J_\alpha+K)(2-\frac{3}{q})}|\Omega|^\frac{1}{q}\\
&\lesssim \alpha^{A(\frac{2}{q}-1)}2^{-(J+K)(2-\frac{3}{q})}|\Omega|^\frac{1}{q}\\
&\sim \alpha^{A(\frac{2}{q}-1)}|\Omega|^\frac{2}{q'}.
\end{align*}
Using the same steps, the sum corresponding to (ii) is at most
\begin{align*}
\sum_{\substack{j \leq J_\alpha\\k > K}}2^{-j(1-\frac{2}{q})}2^{-k(2-\frac{3}{q})}\alpha^{A(1-\frac{1}{q})}2^{-J_\alpha(1-\frac{1}{q})}|\Omega|^\frac{1}{q} &\sim \alpha^{A(1-\frac{1}{q})}2^{-(J_\alpha+K)(2-\frac{3}{q})}|\Omega|^\frac{1}{q}\\ &\lesssim \alpha^{A(\frac{2}{q}-1)}|\Omega|^\frac{2}{q'}.
\end{align*}
The sums corresponding to (iii) and (iv) can be handled in essentially the same way, leading to the estimate
\begin{align*}
\|\fE\chi_{\Omega'}\|_{2q} \lesssim \alpha^{A(\frac{1}{q}-\frac{1}{2})}|\Omega|^\frac{1}{q'}.
\end{align*}
We conclude the proof by setting $A := (\frac{1}{q}-\frac{1}{2})^{-1}$.
\end{proof}

\emph{Step 2.} For each $0 < \alpha \leq 1$, let $S_\alpha := \pi_1(\pi_{1,3}(\Omega_\alpha^1))$, and note that $|S_\alpha| \sim 2^{-J_\alpha}$ with $J_\alpha$ as in the proof of Lemma \ref{lem1}.  Given a dyadic number $0 < \eta \leq \alpha$ and a Lebesgue point $\zeta_1$ of $S_\alpha$, let $I_{\alpha,\eta}(\zeta_1)$ be the maximal dyadic interval $I$ such that $\zeta_1 \in I$ and
\begin{align}\label{ex5}
\frac{|I \cap S_\alpha|}{|I|} \geq \eta^B,
\end{align}
where $B$ is an admissible constant to be chosen later; such an interval exists by the Lebesgue differentiation theorem.  Without loss of generality, we assume that $S_\alpha$ is equal to its set of Lebesgue points.  Let
\begin{align*}
T_{\alpha,\eta} := \{\zeta_1 \in S_\alpha : |I_{\alpha,\eta}(\zeta_1)| \geq \eta^B2^{-J_\alpha}\}.
\end{align*}
If $\alpha < \varepsilon$, define $S_{\alpha,\alpha} := T_{\alpha,\alpha}$ and $S_{\alpha,\eta} := T_{\alpha,\eta} \setminus T_{\alpha,2\eta}$ for $\eta < \alpha$, and let
\begin{align*}
\Omega_{\alpha,\eta}^2 := \Omega_\alpha^1 \cap \pi_{1,3}^{-1}(\pi_1^{-1}(S_{\alpha,\eta})).
\end{align*}
For $\varepsilon \leq \alpha \leq 1$, define $S_{\alpha,\varepsilon} := T_{\alpha,\varepsilon}$ and $S_{\alpha,\eta} := T_{\alpha,\eta} \setminus T_{\alpha,2\eta}$ for $\eta < \varepsilon$.  For $\eta \leq \varepsilon$, let
\begin{align*}
\Omega_{\varepsilon,\eta}^2 := \bigcup_{\varepsilon \leq \alpha \leq 1}\tilde{\Omega}_{\alpha,\eta}^2,
\end{align*}
where $\tilde{\Omega}_{\alpha,\eta}^2 := \Omega_{\alpha}^1 \cap \pi_{1,3}^{-1}(\pi_1^{-1}(S_{\alpha,\eta}))$.  
\begin{remark}\label{rmk3.3}
We note that $\Omega_{\alpha,\eta}^2 \subseteq \Omega_{\alpha}^1$ for $\alpha < \varepsilon$ and $\tilde{\Omega}_{\alpha,\eta}^2 \subseteq \Omega_\alpha^1$ for $\varepsilon \leq \alpha \leq 1$, while in general $\Omega_{\varepsilon,\eta}^2$ is not contained in $\Omega_{\varepsilon}^1$.  We do have
\begin{align*}
\Omega = \bigcup_{0 < \alpha \leq 1} \Omega_\alpha^1 =  \bigcup_{0 < \alpha \leq \varepsilon}\bigcup_{0 < \eta \leq \alpha}\Omega_{\alpha,\eta}^2.
\end{align*}
\end{remark}
  
\begin{lemma}\label{lem2}
For every $0 < \eta \leq \alpha \leq \varepsilon$, the set $\Omega_{\alpha,\eta}^2$ is contained in a union of $O(\eta^{-3B-A-1})$ boxes of the form $\tilde{\tau}$, with $\tau \in \fT_{J,0}$, and satisfies $\rC(\Omega_{\alpha,\eta}^2,\Omega) \lesssim \eta^{1/2}$.
\end{lemma}

\begin{proof}[Proof of Lemma \ref{lem2}]
We argue first under the hypothesis that $\alpha < \varepsilon$, then indicate the changes needed when $\alpha = \varepsilon$. By its definition, $S_{\alpha,\eta}$ is covered by dyadic intervals $I$ of length $|I| \gtrsim \eta^B|S_\alpha|$, in each of which $S_\alpha$ has density obeying \eqref{ex5}.  The density of each such $I$ in $S_\alpha$ is
\begin{align*}
\frac{|I \cap S_{\alpha}|}{|S_\alpha|} = \frac{|I \cap S_\alpha|}{|I|}\cdot\frac{|I|}{|S_\alpha|} \gtrsim \eta^{2B}.
\end{align*}
Therefore, if $\fC$ is a minimal-cardinality covering of $S_{\alpha,\eta}$ by these $I$ (consisting necessarily of pairwise disjoint intervals), then $\#\fC \lesssim \eta^{-2B}$.  Moreover, \eqref{ex5} and \eqref{ex3'} imply that  
\begin{align*}
|I| \lesssim \eta^{-B}2^{-J_\alpha} \lesssim \eta^{-B}\alpha^{-A}2^{-J} \leq \eta^{-B-A}2^{-J}
\end{align*}
for every $I \in \fC$.  Thus, $S_{\alpha,\eta}$ is covered by $O(\eta^{-3B-A})$ intervals in $\fI_J$. Since $\alpha < \varepsilon$, it immediately follows that $\Omega_{\alpha,\eta}^2$ is contained in a union of $O(\eta^{-3B-A})$ boxes of the form claimed.  

We turn to the restriction estimate.  If $\eta = \alpha$, the result follows from Lemma \ref{lem1} and Remark \ref{rmk3.3}.  Thus, we may assume that $\eta < \alpha$. We proceed by optimizing the proof of Proposition \ref{prop1}, as in \cite{Stovall}.  Let $\Omega' \subseteq \Omega_{\alpha,\eta}^2$ be measurable.  From the proof of \eqref{ex2}, we see that
\begin{align}\label{ex6}
\|\fE\chi_{\Omega'}\|_{2q}^2 \lesssim \sum_{j,k}2^{-(j+k)(1-\frac{2}{q})}\max_{\tau \in \fT_{j,k}}|\Omega' \cap 10 \tilde{\tau}|^{1-\frac{1}{q}}|\Omega|^\frac{1}{q}.
\end{align}
Fix $\tau \in \fT_{j,k}$.  By Fubini's theorem and the definition of $\Omega_\alpha^1$ (with $\alpha < \varepsilon$), we have
\begin{align}\label{ex7}
\notag|\Omega' \cap 10\tilde{\tau}| &\lesssim |\pi_{1,3}(\Omega' \cap 10\tilde{\tau})|\min\{2^{-K},2^{-k}\}\\
\notag&\lesssim \alpha^A \min\{|\pi_1(\pi_{1,3}(\Omega'))|,|\pi_1(\pi_{1,3}(10\tilde{\tau}))|\}\min\{2^{-K},2^{-k}\}\\
&\lesssim \alpha^A\min\{2^{-J_\alpha},2^{-j}\}\min\{2^{-K},2^{-k}\}.
\end{align}
For certain $j$, the definition of $\Omega_{\alpha,\eta}^2$ leads to a better estimate.  We claim that if $|j-J_\alpha| < \frac{B}{4}\log\eta^{-1}$, then
\begin{align}\label{ex8}
|\Omega' \cap 10\tilde{\tau}| \lesssim \eta^{\frac{3B}{4}}\alpha^A\min\{2^{-J_\alpha},2^{-j}\}\min\{2^{-K},2^{-k}\}.
\end{align}
Fix such a $j$.  Note that $10\tau$ is contained in a union of four tiles $\kappa$ in $\fT_{j-4,k-4}$, so it suffices to prove \eqref{ex8} with $\kappa$ in place of $10\tau$.  Let $\kappa =: I_{j-4} \times I_{k-4}$, where $I_{j-4} \in \fI_{j-4}$ and $I_{k-4} \in \fI_{k-4}$.  We have
\begin{align*}
|I_{j-4}| = 2^{-j+4} \geq 16\eta^\frac{B}{4}2^{-J_\alpha} \geq (2\eta)^B 2^{-J_\alpha},
\end{align*}
provided $\eta$ is sufficiently small.  Suppose that $I_{j-4} \cap S_{\alpha,\eta} \neq \emptyset$.  Then there exists $\zeta_1 \in I_{j-4}$ such that $\zeta_1 \notin T_{\alpha,2\eta}$, whence
\begin{align*}
|I_{\alpha,2\eta}(\zeta_1)| < (2\eta)^B2^{-J_\alpha} \leq |I_{j-4}|.
\end{align*}
Consequently, by the maximality of $I_{\alpha,2\eta}(\zeta_1)$ and the fact that $2^{-j} \leq \eta^{-\frac{B}{4}}2^{-J_\alpha}$, we have
\begin{align*}
|I_{j-4} \cap S_{\alpha,\eta}| \leq |I_{j-4} \cap S_\alpha| \leq (2\eta)^B|I_{j-4}| = 16(2\eta)^B2^{-j} \lesssim \eta^\frac{3B}{4}\min\{2^{-J_\alpha},2^{-j}\}.
\end{align*}
Thus, by Fubini's theorem,
\begin{align*}
|\Omega' \cap \tilde{\kappa}| \lesssim \alpha^A|S_{\alpha,\eta} \cap I_{j-4}|\min\{2^{-K},2^{-k}\} \lesssim \eta^\frac{3B}{4}\alpha^A\min\{2^{-J_\alpha},2^{-j}\}\min\{2^{-K},2^{-k}\},
\end{align*}
as claimed.

Now, to bound \eqref{ex6}, we split the sum into eight parts determined by the conditions (a) $k \leq K$, (b) $k > K$ and (i) $j \leq J_\alpha - \frac{B}{4}\log\eta^{-1}$, (ii) $J_\alpha - \frac{B}{4}\log\eta^{-1} < j \leq J_\alpha$, (iii) $J_\alpha < j <J_\alpha + \frac{B}{4}\log\eta^{-1}$, (iv) $J_\alpha + \frac{B}{4}\log\eta^{-1} \leq j$.  In each case, we use \eqref{ex8} if it applies, otherwise \eqref{ex7}. Summing geometric series and using \eqref{ex3'} and the fact that $|\Omega| \sim 2^{-J-K}$, it is straightforward to deduce the bound
\begin{align*}
\|\fE\chi_{\Omega'}\|_{2q} \lesssim \eta^{B'}|\Omega|^\frac{1}{q'},
\end{align*}
where $B'$ is an admissible constant determined by $B$.  We may choose $B$ so that $B' = 1$; this better-than-required exponent will be utilized in the next paragraph.

Suppose now that $\alpha = \varepsilon$.  For $\eta < \varepsilon$, the preceding arguments work equally well with $\Omega_{\alpha,\eta}^2$ replaced by $\tilde{\Omega}_{\alpha',\eta}^2$, where $\varepsilon \leq \alpha' \leq 1$.  In particular, each such $\tilde{\Omega}_{\alpha',\eta}^2$ is contained in a union of $O(\eta^{-3B-A})$ boxes $\tilde{\tau}$, with $\tau \in \fT_{J,0}$, and satisfies $\rC(\tilde{\Omega}_{\alpha',\eta}^2,\Omega) \lesssim \eta$.  The case $\eta = \varepsilon$ is similar, but with the bound $\rC(\tilde{\Omega}_{\alpha',\varepsilon}^2,\Omega) \lesssim \varepsilon$ following directly from the definition of $\varepsilon$. Since the number of sets $\tilde{\Omega}_{\alpha',\eta}^2$ is $O(\log\varepsilon^{-1}) = O(\eta^{-1/2})$ and their union is $\Omega_{\varepsilon,\eta}^2$, the lemma holds for $\alpha = \varepsilon$ as well.
\end{proof}

\emph{Step 3.} For dyadic $0 < \eta \leq \alpha \leq \varepsilon$ and $0 < \rho \lesssim \eta^{1/5}$, define
\begin{align*}
\Omega_{\alpha,\eta,\rho}^3 &:= \{(\zeta,\sigma) \in \Omega_{\alpha,\eta}^2 : \fH^{1}(\pi_{2,3}^{-1}(\zeta_2,\sigma) \cap \Omega_{\alpha,\eta}^2) \sim \rho^{5C}\eta^{-3B-A-1-C}2^{-J}\},
\end{align*} 
where $C$ is an admissible constant to be chosen later.  Lemma \ref{lem2} implies that $\fH^1(\pi_{2,3}^{-1}(\zeta_2,\sigma) \cap \Omega_{\alpha,\eta}^2) \lesssim \eta^{-3B-A-1}2^{-J}$ for every $(\zeta,\sigma) \in \Omega_{\alpha,\eta}^2$.  Thus,
\begin{align*}
\Omega_{\alpha,\eta}^2 = \bigcup_{0 < \rho \lesssim \eta^{1/5}} \Omega_{\alpha,\eta,\rho}^3.
\end{align*}

\begin{lemma}\label{lem3}
For every $0 < \eta \leq \alpha \leq \varepsilon$ and $0 < \rho \lesssim \eta^{1/5}$, we have $\rC(\Omega_{\alpha,\eta,\rho}^3,\Omega) \lesssim \rho$.
\end{lemma}

\begin{proof}[Proof of Lemma \ref{lem3}]
If $\rho^{5C}\eta^{-3B-A-1-C} \geq \rho^{2C}$, then by Lemma \ref{lem2}, we have
\begin{align*}
\rC(\Omega_{\alpha,\eta,\rho}^3,\Omega) \lesssim \eta^\frac{1}{2} \leq \rho^\frac{3C}{2(3B+A+1+C)} \lesssim \rho
\end{align*}
for $C$ chosen sufficiently large.  Thus, we may assume that $\rho^{5C}\eta^{-3B-A-1-C} \leq \rho^{2C}$. Given a measurable set $\Omega' \subseteq \Omega_{\alpha,\eta,\rho}^3$ and $\tau \in \fT_{j,k}$, the set $\Omega' \cap 10\tilde{\tau}$ has $\pi_{1,3}$- and $\pi_{2,3}$-fibers of length at most $\min\{2^{-K},2^{-k}\}$ and $\min\{\rho^{2C}2^{-J},2^{-j}\}$, respectively, and it has $\pi_{1,3}$- and $\pi_{2,3}$-projections of measure at most $\min\{2^{-J},2^{-j}\}$ and $2^{-k}$, respectively.  Therefore, by Fubini's theorem,
\begin{align}\label{ex18'}
|\Omega' \cap 10\tilde{\tau}| \lesssim \min\{2^{-J-K},2^{-j-K},2^{-j-k},\rho^{2C}2^{-J-k}\}.
\end{align}
Following \cite{Stovall}, we define
\begin{align*}
R_1 &:= \{(j,k) : J- C\log\rho^{-1} \geq j,~K \geq k\} \cup \{(j,k) : J \geq j,~K-C\log\rho^{-1} \geq k\}\\
R_2 &:= \{(j,k) : j \geq J+C\log\rho^{-1},~K \geq k\} \cup \{(j,k) : j \geq J,~K-C\log\rho^{-1} \geq k\}\\
R_3 &:= \{(j,k) : j \geq J+C\log\rho^{-1},~k \geq K\} \cup \{(j,k) : j \geq J,~k \geq K+C\log\rho^{-1}\}\\
R_4 &:= \{(j,k) : J + C\log\rho^{-1} \geq j,~k+C\log\rho^{-1} \geq K\}.
\end{align*}
Each $(j,k)$ belongs to some $R_i$, $1 \leq i \leq 4$, so by \eqref{ex6} and \eqref{ex18'}, we have
\begin{align*}
\|\fE\chi_{\Omega'}\|_{2q}^2 \lesssim \sum_{(j,k) \in R_1}2^{-(j+k)(1-\frac{2}{q})}2^{-(J+K)(1-\frac{1}{q})}|\Omega|^\frac{1}{q} + \sum_{(j,k) \in R_2}2^{-(j+k)(1-\frac{2}{q})}2^{-(j+K)(1-\frac{1}{q})}|\Omega|^\frac{1}{q}\\
+ \sum_{(j,k) \in R_3}2^{-(j+k)(1-\frac{2}{q})}2^{-(j+k)(1-\frac{1}{q})}|\Omega|^\frac{1}{q} + \sum_{(j,k) \in R_4}2^{-(j+k)(1-\frac{2}{q})}\rho^{2C(1-\frac{1}{q})}2^{-(J+k)(1-\frac{1}{q})}|\Omega|^\frac{1}{q}.
\end{align*}
Summing these geometric series leads to the bound $\|\fE\chi_{\Omega'}\|_{2q} \lesssim \rho^{C'}|\Omega|^{1/q'}$, where $C'$ is an admissible constant determined by $C$; increasing $C$ if necessary, we can make $C' \geq 1$.
\end{proof}

As indicated above, the final two steps of our construction are variants of the first two, wherein the roles of the coordinates $\zeta_1,\zeta_2$ are reversed.  Below, we briefly explain how the argument in Steps 1 and 2 transfers, without rewriting all the details.  In short, $\Omega_{\alpha,\eta,\rho}^3$ has constant $\pi_{2,3}$-fiber length by construction and thus may replace $\Omega$, and $\rho$ may replace $\varepsilon$ by Lemma \ref{lem3}.

\emph{Step 4.} For each dyadic number $0 < \beta \leq 1$, define
\begin{align*}
\Omega_{\alpha,\eta,\rho,\beta}^4 := \{(\zeta,\sigma) \in \Omega_{\alpha,\eta,\rho}^3 : \fH^1(\pi_1^{-1}(\zeta_2) \cap \pi_{2,3}(\Omega_{\alpha,\eta,\rho}^3)) \sim \beta^A\}.
\end{align*}
\begin{lemma}\label{lem4}
For every $0 < \beta \leq 1$, $0 < \eta \leq \alpha \leq \varepsilon$, and $0 < \rho \lesssim \eta^{1/5}$, we have $\rC(\Omega_{\alpha,\eta,\rho,\beta}^4,\Omega) \lesssim \beta$.
\end{lemma}
\begin{proof}[Proof of Lemma \ref{lem4}]
Since $\Omega_{\alpha,\eta,\rho}^3$ has constant $\pi_{2,3}$-fiber length, we can imitate the proof of Lemma \ref{lem1} to show that $\beta \gtrsim \rC(\Omega_{\alpha,\eta,\rho,\beta}^4, \Omega_{\alpha,\eta,\rho}^3) \geq \rC(\Omega_{\alpha,\eta,\rho,\beta}^4,\Omega)$.
\end{proof}

\emph{Step 5.} For each $0 < \beta \leq 1$, let $S_{\alpha,\eta,\rho,\beta} := \pi_1(\pi_{2,3}(\Omega_{\alpha,\eta,\rho,\beta}^4))$, and let $K_{\alpha,\eta,\rho,\beta}$ be an integer such that $|S_{\alpha,\eta,\rho,\beta}| \sim 2^{-K_{\alpha,\eta,\rho,\beta}}$.  Given a dyadic number $0 < \delta \leq \beta$ and a Lebesgue point $\zeta_2$ of $S_{\alpha,\eta,\rho,\beta}$, let $I_{\alpha,\eta,\rho,\beta,\delta}(\zeta_2)$ be the maximal dyadic interval $I$ such that $\zeta_2 \in I$ and
\begin{align*}
\frac{|I \cap S_{\alpha,\eta,\rho,\beta}|}{|I|} \geq \delta^B.
\end{align*}
As before, we may assume that $S_{\alpha,\eta,\rho,\beta}$ is equal to its set of Lebesgue points.  Let
\begin{align*}
T_{\alpha,\eta,\rho,\beta,\delta} := \{\zeta_2 \in S_{\alpha,\eta,\rho,\beta} : |I_{\alpha,\eta,\rho,\beta,\delta}(\zeta_2)| \geq \delta^B2^{-K_{\alpha,\eta,\rho,\beta}}\}.
\end{align*}
If $\beta < \rho$, define $S_{\alpha,\eta,\rho,\beta,\beta} := T_{\alpha,\eta,\rho,\beta,\beta}$ and $S_{\alpha,\eta,\rho,\beta,\delta} := T_{\alpha,\eta,\rho,\beta,\delta} \setminus T_{\alpha,\eta,\rho,\beta,2\delta}$ for $\delta < \beta$, and let
\begin{align*}
\Omega_{\alpha,\eta,\rho,\beta,\delta}^5 := \Omega_{\alpha,\eta,\rho,\beta}^4 \cap \pi_{2,3}^{-1}(\pi_1^{-1}(S_{\alpha,\eta,\rho,\beta,\delta})).
\end{align*}
If $\rho \leq \beta \leq 1$, define $S_{\alpha,\eta,\rho,\beta,\rho} := T_{\alpha,\eta,\rho,\beta,\rho}$ and $S_{\alpha,\eta,\rho,\beta,\delta} := T_{\alpha,\eta,\rho,\beta,\delta} \setminus T_{\alpha,\eta,\rho,\beta,2\delta}$ for $\delta < \rho$.  For $\delta \leq \rho$, let
\begin{align*}
\Omega_{\alpha,\eta,\rho,\rho,\delta}^5 := \bigcup_{\rho \leq \beta \leq 1} \tilde{\Omega}_{\alpha,\eta,\rho,\beta,\delta}^5,
\end{align*}
where $\tilde{\Omega}_{\alpha,\eta,\rho,\beta,\delta}^5 := \Omega_{\alpha,\eta,\rho,\beta}^4 \cap \pi_{2,3}^{-1}(\pi_1^{-1}(S_{\alpha,\eta,\rho,\beta,\delta}))$.

Admittedly, the subscripts have become awkward.  However, all we have done is repeated Step 2, replacing  $\Omega_\alpha^1$ and $\varepsilon$ by $\Omega_{\alpha,\eta,\rho,\beta}^4$ and $\rho$, respectively, and projecting onto the $\zeta_2$-axis instead of the $\zeta_1$-axis.  We note that
\begin{align*}
\Omega_{\alpha,\eta,\rho}^3 = \bigcup_{0 < \beta \leq \rho}\bigcup_{0 < \delta \leq \beta}\Omega_{\alpha,\eta,\rho,\beta,\delta}^5.
\end{align*}

\begin{lemma}\label{lem5}
For every $0 < \eta \leq \alpha \leq \varepsilon$ and $0 < \delta \leq \beta \leq \rho \lesssim \eta^{1/5}$, the set $\Omega_{\alpha,\eta,\rho,\beta,\delta}^5$ is contained in a union of $O(\delta^{-18B-6A-5C-6})$ boxes of the form $\tilde{\tau}$, with $\tau \in \fT_{J,K}$, and satisfies $\rC(\Omega_{\alpha,\eta,\rho,\beta,\delta}^5,\Omega) \lesssim \delta^{1/2}$.
\end{lemma}
\begin{proof}[Proof of Lemma \ref{lem5}]
Let $K_{\alpha,\eta,\rho}$ be an integer such that $|\pi_{2,3}(\Omega_{\alpha,\eta,\rho}^3)| \sim 2^{-K_{\alpha,\eta,\rho}}$.  Imitating the proof of Lemma \ref{lem2}, we can show that $\Omega_{\alpha,\eta,\rho,\beta,\delta}^5$ is covered by $O(\delta^{-3B-A-1})$ boxes of the form $\tilde{\tau}$, where $\tau \in \fT_{0,K_{\alpha,\eta,\rho}}$.  Since $\Omega_{\alpha,\eta,\rho}^3$ has $\pi_{2,3}$-fibers of length $\rho^{5C}\eta^{-3B-A-1-C}2^{-J}$ and volume at most $2^{-J-K}$, it follows that $2^{-K_{\alpha,\eta,\rho}} \lesssim \rho^{-5C}2^{-K}$.  Thus, $\Omega_{\alpha,\eta,\rho,\beta,\delta}^5$ is covered by $O(\rho^{-5C}\delta^{-3B-A-1}) = O(\delta^{-3B-A-5C-1})$ boxes $\tilde{\tau}$, with $\tau \in \fT_{0,K}$.  Since $\Omega_{\alpha,\eta,\rho,\beta,\delta}^5 \subseteq \Omega_{\alpha,\eta}^2$ and $\eta \gtrsim \delta^5$, Lemma \ref{lem2} now implies that $\Omega_{\alpha,\eta,\rho,\beta,\delta}^5$ is covered by $O(\delta^{-18B-6A-5C-6})$ boxes $\tilde{\tau}$, with $\tau \in \fT_{J,K}$.

To obtain the restriction estimate, we can adapt the proof of Lemma \ref{lem2}.  
\end{proof}

Finally, we are equipped to finish the proof of Proposition \ref{prop2}. We have
\begin{align*}
\Omega = \bigcup_{0 < \alpha \leq \varepsilon}\bigcup_{0 < \eta \leq \alpha}\bigcup_{0 < \rho \lesssim \eta^{1/5}}\bigcup_{0 < \beta \leq \rho}\bigcup_{0 < \delta \leq \beta}\Omega_{\alpha,\eta,\rho,\beta,\delta}^5 = \bigcup_{0 < \delta \lesssim \varepsilon^{1/5}}\Omega_\delta,
\end{align*}
where
\begin{align*}
\Omega_\delta := \bigcup_{\delta \leq \beta \lesssim \varepsilon^{1/5}}\bigcup_{\beta \leq \rho \lesssim \varepsilon^{1/5}}\bigcup_{\rho^5 \lesssim \eta \leq \varepsilon}\bigcup_{\eta \leq \alpha \leq \varepsilon}\Omega_{\alpha,\eta,\rho,\beta,\delta}^5.
\end{align*}
Since for fixed $\delta$ there are $O((\log \delta^{-1})^4)$ sets $\Omega_{\alpha,\eta,\rho,\beta,\delta}^5$, properties (i) and (ii) in the proposition follow from Lemma \ref{lem5}.

\end{proof}

\section{Extensions of near unions of boxes}\label{sec4}
For each $K$, let $J(K)$ be an integer such that $|\Omega(K)| \sim 2^{-J(K)-K}$.  For each dyadic number $\varepsilon$, let $\fK(\varepsilon)$ denote the collection of all integers $K \geq 0$ for which $\varepsilon = \rC(\Omega(K))$.  For each $K \in \fK(\varepsilon)$, Proposition \ref{prop2} gives a decomposition $\Omega(K) = \bigcup_{0 < \delta \lesssim \varepsilon^{1/5}} \Omega(K)_\delta$ such that for each $\delta$, we have $\Omega(K)_\delta \subseteq \bigcup_{\tau \in \fT(K)_\delta}\tilde{\tau}$ for some $\fT(K)_\delta \subseteq \fT_{J(K),K}$ with $\#\fT(K)_\delta \lesssim \delta^{-C_0}$.

\begin{lemma}\label{lem7}
For every $0 < \delta \lesssim \varepsilon^{1/5}$, we have
\begin{align*}
\bigg\|\sum_{K \in \fK(\varepsilon)}\fE\chi_{\Omega(K)_\delta}\bigg\|_{2q}^{2q} \lesssim (\log \delta^{-1})^{2q}\sum_{K \in \fK(\varepsilon)}\|\fE\chi_{\Omega(K)_\delta}\|_{2q}^{2q} + \delta|\Omega|^\frac{2q}{q'}.
\end{align*}
\end{lemma}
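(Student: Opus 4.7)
The plan is to adapt the bilinear-to-linear argument of Proposition \ref{prop1}, applied to the union $\Omega^* := \bigcup_{K \in \fK(\varepsilon)}\Omega(K)_\delta$, while tracking the decomposition $\sum_K \fE\chi_{\Omega(K)_\delta} = \fE\chi_{\Omega^*}$ to isolate diagonal contributions from off-diagonal cross terms.  Setting $F_K := \fE\chi_{\Omega(K)_\delta}$ and noting that $\Omega^* \cap \tilde\tau = \bigsqcup_K (\Omega(K)_\delta \cap \tilde\tau)$, the argument leading to \eqref{ex2} yields
\begin{align*}
\Big\|\sum_K F_K\Big\|_{2q}^2 \lesssim \sum_{j,k}\Big(\sum_{\tau\sim\kappa\in\fT_{j,k}} \Big\|\sum_{K,K'}\fE\chi_{\Omega(K)_\delta\cap\tilde\tau}\fE\chi_{\Omega(K')_\delta\cap\tilde\kappa}\Big\|_q^q\Big)^{1/q}.
\end{align*}
I would split the inner bilinear sum into a diagonal part ($K = K'$) and an off-diagonal part ($K \ne K'$), then combine the resulting estimates upon raising to the $q$-th power.

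For the diagonal part, reversing the bilinear decomposition per fixed $K$ recovers $\|F_K\|_{2q}^2$, so the diagonal contribution is naturally of the form $\sum_K\|F_K\|_{2q}^2$.  To upgrade this to the desired $\sum_K \|F_K\|_{2q}^{2q}$ bound, I would pigeonhole the indices $K$ according to the dyadic size of $\|F_K\|_{2q}$; the uniform upper bound $\|F_K\|_{2q} \lesssim \delta^{1/3}|\Omega|^{1/q'}$ from Proposition \ref{prop2}(i), combined with a threshold below which the contribution is absorbed into $\delta|\Omega|^{2q/q'}$, restricts the relevant dyadic range to $O(\log\delta^{-1})$ levels.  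Within each level, Hölder in $K$ produces a factor of $\log\delta^{-1}$, and these factors accumulate to the stated $(\log\delta^{-1})^{2q}$ after raising to the $q$-th power.

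For the off-diagonal contribution, Theorem \ref{thm2} and rescaling give
\begin{align*}
\|\fE\chi_{\Omega(K)_\delta\cap\tilde\tau}\fE\chi_{\Omega(K')_\delta\cap\tilde\kappa}\|_q \lesssim 2^{-(j+k)(1-\frac{2}{q})}|\Omega(K)_\delta\cap\tilde\tau|^{\frac{1}{2}}|\Omega(K')_\delta\cap\tilde\kappa|^{\frac{1}{2}},
\end{align*}
and Proposition \ref{prop2}(ii) restricts the nontrivial intersections to the $O(\delta^{-C_0})$ boxes in $\fT(K)_\delta$.  Summing the resulting geometric series over $j,k$ as in the proof of Proposition \ref{prop1} yields a bound proportional to a positive power of $\delta$ times $|\Omega|^{2/q'}$, which becomes the error $\delta|\Omega|^{2q/q'}$ upon raising to the $q$-th power.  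The main obstacle is extracting the correct power of $\delta$ from the off-diagonal sum: the naive counting grows with $\#\fK(\varepsilon)^2$ and with the $O(\delta^{-2C_0})$ box count, so one must trade these factors against the smallness $\varepsilon \gtrsim \delta^5$ (from $\delta \lesssim \varepsilon^{1/5}$) and the sharp bilinear decay $2^{-(j+k)(1-2/q)}$, with Proposition \ref{prop2}(ii) providing exactly the tight structural control needed to make the balance work out.
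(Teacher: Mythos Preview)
Your plan has a genuine gap in the off-diagonal part, and the diagonal part is also structured differently from what actually works.

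For the off-diagonal terms, summing the rescaled bilinear estimate over $j,k$ as in Proposition~\ref{prop1} gives, for each pair $(K,K')$, only
\[
\|\fE\chi_{\Omega(K)_\delta}\fE\chi_{\Omega(K')_\delta}\|_q \lesssim |\Omega(K)|^{1/q'}|\Omega(K')|^{1/q'},
\]
with no gain whatsoever from $K\neq K'$. The factors you list --- $\varepsilon\gtrsim\delta^5$, the decay $2^{-(j+k)(1-2/q)}$, and the box count $\delta^{-2C_0}$ --- are already fully consumed in reaching this bound; they do not produce an extra power of $\delta$ upon summing over pairs. The paper obtains the needed decay by a different mechanism: it first partitions $\fK(\varepsilon)$ into $O(\log\delta^{-1})$ subfamilies that are $A\log\delta^{-1}$-separated in $K$, and then proves the key bilinear lemma
\[
\|\fE\chi_{\Omega(K)_\delta}\fE\chi_{\Omega(K')_\delta}\|_q \lesssim 2^{-c_0|K-K'|}\max\{|\Omega(K)|,|\Omega(K')|\}^{2/q'}.
\]
The hard case of that lemma (when $K<K'$ but $J(K)>J(K')$, so neither box contains the other) is not accessible from Theorem~\ref{thm2} at a single scale; it requires decomposing each covering tile into dyadic annuli and applying a two-parameter Littlewood--Paley square function estimate before invoking Theorem~\ref{thm2} on the pieces. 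That geometric argument is the heart of the proof, and your proposal does not contain it.

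For the diagonal part, the bilinear decomposition is a one-sided inequality, so you cannot ``reverse'' it to recover $\sum_K\|F_K\|_{2q}^2$. The paper instead expands $\|\sum_K F_K\|_{2q}^{2q}$ directly as $\int|\sum_{\boldsymbol K\in\fK^4}\prod_i F_{K_i}|^{q/2}$ and uses $q/2<1$ to pull the sum outside; the diagonal then gives $\sum_K\|F_K\|_{2q}^{2q}$ exactly, with no pigeonholing. The $(\log\delta^{-1})^{2q}$ factor comes not from sorting $\|F_K\|_{2q}$ by size, but from the triangle inequality over the $O(\log\delta^{-1})$ separated subfamilies introduced at the outset.
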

\begin{proof}[Proof of Lemma \ref{lem7}]
Let $A$ be an admissible constant to be chosen later, and divide $\fK(\varepsilon)$ into $O(\log\delta^{-1})$ subsets $\fK$ such that each is $A\log\delta^{-1}$-separated.  It suffices to prove that
\begin{align*}
\bigg\|\sum_{K \in \fK}\fE\chi_{\Omega(K)_\delta}\bigg\|_{2q}^{2q} \lesssim \sum_{K \in \fK}\|\fE\chi_{\Omega(K)_\delta}\|_{2q}^{2q} + \delta^2|\Omega|^\frac{2q}{q'}
\end{align*}
for each $\fK$.  Since $q < 2$, we have
\begin{align}\label{ex17'}
\notag \bigg\|\sum_{K \in \fK} \fE\chi_{\Omega(K)_\delta}\bigg\|_{2q}^{2q} &= \int\bigg\vert\sum_{\boldsymbol{K} \in \fK^4}\prod_{i=1}^4 \fE\chi_{\Omega(K_i)_\delta}\bigg\vert^\frac{q}{2}\\ &\lesssim \sum_{K \in \fK}\|\fE\chi_{\Omega(K)_\delta}\|_{2q}^{2q} + \sum_{ \boldsymbol{K} \in \fK^4 \setminus D(\fK^4)}\bigg\|\prod_{i=1}^4\fE\chi_{\Omega(K_i)_\delta}\bigg\|_{\frac{q}{2}}^\frac{q}{2},
\end{align}
where $D(\fK^4) := \{\boldsymbol{K} \in \fK^4 : K_1 = \cdots = K_4\}$.  To control the latter sum, we have the following lemma.

\begin{lemma}\label{lem8}
For all $K,K' \in \fK$, we have
\begin{align}\label{ex13}
\|\fE\chi_{\Omega(K)_\delta}\fE\chi_{\Omega(K')_\delta}\|_q \lesssim 2^{-c_0|K-K'|}\max\{|\Omega(K)|,|\Omega(K')|\}^\frac{2}{q'}
\end{align}
for some admissible constant $c_0$.
\end{lemma}
\begin{proof}[Proof of Lemma \ref{lem8}]
By the Cauchy--Schwarz inequality and Proposition \ref{prop1},
\begin{align*}
\|\fE\chi_{\Omega(K)_\delta}\fE\chi_{\Omega(K')_\delta}\|_q \lesssim |\Omega(K)|^\frac{1}{q'}|\Omega(K')|^\frac{1}{q'}.
\end{align*}
For $J := J(K)$ and $J' := J(K')$, we have
\begin{align*}
|\Omega(K)|^\frac{1}{q'}|\Omega(K')|^\frac{1}{q'} \lesssim 2^{-\frac{|K-K'|}{q'}}\max\{|\Omega(K)|,|\Omega(K')|\}^\frac{2}{q'}
\end{align*}
whenever either (i) $K = K'$, (ii) $J = J'$, (iii) $J < J'$ and $K < K'$, or (iv) $J > J'$ and $K > K'$; in these cases, \eqref{ex13} follows immediately. 

Thus, by symmetry, it suffices to prove \eqref{ex13} for $K < K'$ and $J > J'$.  By the bound $\#(\fT(K)_\delta \times \fT(K')_\delta) \lesssim \delta^{-2C_0}$ and the separation condition on $\fK$ (with $A$ sufficiently large), it suffices to prove that
\begin{align}\label{ex14}
\|\fE\chi_{\Omega(K)_\delta \cap \tilde{\tau}}\fE\chi_{\Omega(K')_\delta \cap \tilde{\kappa}}\|_q \lesssim 2^{-c|K-K'|}\max\{|\Omega(K)|,|\Omega(K')|\}^\frac{2}{q'}
\end{align}
for all $\tau \in \fT(K)_\delta$, $\kappa \in \fT(K')_\delta$, and some admissible constant $c$.

Fix two such tiles $\tau,\kappa$, and note that $\tau$ must be taller than $\kappa$ and $\kappa$ wider than $\tau$.  By translation, we may assume that the $\zeta_2$- and $\zeta_1$-axes intersect the centers of $\tau$ and $\kappa$, respectively.  Define
\begin{align*}
\tau_k := \begin{cases} \tau \cap \{\zeta : |\zeta_2| \sim 2^{-k}\}, &k < K',\\ \tau \cap \{\zeta : |\zeta_2| \lesssim 2^{-{K'}}\}, &k = K'\end{cases} \quad \text{and} \quad \kappa_j := \begin{cases} \kappa \cap \{\zeta : |\zeta_1| \sim 2^{-j}\}, &j < J,\\ \kappa \cap \{\zeta : |\zeta_1| \lesssim 2^{-{J}}\}, &j = J\end{cases}, 
\end{align*}
so that
\begin{align*}
\tau = \bigcup_{k=0}^{K'}\tau_k \quad\quad\text{and}\quad\quad \kappa = \bigcup_{j=0}^J \kappa_j.
\end{align*}
By the two-parameter Littlewood--Paley square function estimate and fact that $q < 2$, we have
\begin{align}\label{ex16'}
\notag\|\fE\chi_{\Omega(K)_\delta \cap \tilde{\tau}}\fE\chi_{\Omega(K')_\delta \cap \tilde{\kappa}}\|_q^q &\lesssim \int\bigg(\sum_{k=0}^{K'}\sum_{j=0}^J|\fE\chi_{\Omega(K)_\delta \cap \tilde{\tau}_k}\fE\chi_{\Omega(K')_\delta \cap \tilde{\kappa}_j}|^2\bigg)^\frac{q}{2}\\
&\lesssim \sum_{k=0}^{K'}\sum_{j=0}^J\|\fE\chi_{\Omega(K)_\delta \cap \tilde{\tau}_k}\fE\chi_{\Omega(K')_\delta \cap \tilde{\kappa}_j}\|_q^q,
\end{align}
where $\tilde{\tau}_k := \tau_k \times [1,2]$ and $\tilde{\kappa}_j := \kappa_j \times [1,2]$.  We first sum the terms with $k = K'$.  By the Cauchy--Schwarz inequality and Proposition \ref{prop1}, we have
\begin{align*}
\sum_{j=0}^J\|\fE\chi_{\Omega(K)_\delta \cap \tilde{\tau}_{K'}}\fE\chi_{\Omega(K')_\delta \cap \tilde{\kappa}_j}\|_q^q \lesssim \sum_{j=0}^J|\tilde{\tau}_{K'}|^\frac{q}{q'}|\tilde{\kappa}_j|^\frac{q}{q'}.
\end{align*}
Since $\kappa$ has width $2^{-{J'}}$, there are at most two nonempty $\kappa_j$ with $j \leq J'$.  This fact and the bound
\begin{align}\label{ex15}
|\tilde{\kappa}_j| \leq \min\{2^{-(j-J')},1\}|\tilde{\kappa}|
\end{align}
imply that $\sum_{j=0}^J|\tilde{\kappa}_j|^\frac{q}{q'} \lesssim |\kappa|^\frac{q}{q'}$.  Since $|\tilde{\tau}_{K'}| \lesssim 2^{-(K'-K)}|\tilde{\tau}|$, $|\tilde{\tau}| \sim |\Omega(K)|$, and $|\tilde{\kappa}| \sim |\Omega(K')|$, we altogether have
\begin{align*}
\sum_{j=0}^J\|\fE\chi_{\Omega(K)_\delta \cap \tilde{\tau}_{K'}}\fE\chi_{\Omega(K')_\delta \cap \tilde{\kappa}_j}\|_q^q \lesssim 2^{-(K'-K)\frac{q}{q'}}|\Omega(K)|^\frac{q}{q'}|\Omega(K')|^\frac{q}{q'},
\end{align*}
which is acceptable.  A similar argument shows that
\begin{align*}
\sum_{k=0}^{K'}\|\fE\chi_{\Omega(K)_\delta \cap \tilde{\tau}_k}\chi_{\Omega(K')_\delta \cap \tilde{\kappa}_J}\|_q^q &\lesssim 2^{-(J-J')\frac{q}{q'}}|\Omega(K)|^\frac{q}{q'}|\Omega(K')|^\frac{q}{q'}\\ &\sim 2^{-(K'-K)\frac{q}{q'}}|\Omega(K)|^\frac{2q}{q'}.
\end{align*}

We now consider the terms with $k < K'$ and $j < J$.  In this case, $\tau_k$ is a subset of four tiles in $\fT_{J,\max\{K,k\}}$ and $\kappa_j$ is a subset of four tiles in $\fT_{\max\{J',j\},K'}$.  Moreover, these tiles are separated by a distance of $2^{-k}$ and $2^{-j}$ in the vertical and horizontal directions, respectively.  Thus, by Theorem \ref{thm2} (rescaled, as in the proof of Proposition \ref{prop1}),
\begin{align*}
\|\fE\chi_{\Omega(K)_\delta \cap \tilde{\tau}_k}\fE\chi_{\Omega(K')_\delta \cap \tilde{\kappa}_j}\|_q \lesssim 2^{-(j+k)(1-\frac{2}{q})}|\Omega(K) \cap \tilde{\tau}_k|^\frac{1}{2}|\Omega(K') \cap \tilde{\kappa}_j|^\frac{1}{2}.
\end{align*}
Using \eqref{ex15} and the analogous bound for $|\tilde{\tau}_k|$, we now get
\begin{align*}
\sum_{k=0}^{K'-1}\sum_{j=0}^{J-1}\|\fE\chi_{\Omega(K)_\delta \cap \tilde{\tau}_k}\fE\chi_{\Omega(K')_\delta \cap \tilde{\kappa}_j}\|_q^q &\lesssim 2^{-(J'-K)(q-2)}|\tilde{\tau}|^\frac{q}{2}|\tilde{\kappa}|^\frac{q}{2}\\ &\sim 2^{(J'-J + K-K')(1-\frac{q}{2})}|\Omega(K)|^\frac{q}{q'}|\Omega(K')|^\frac{q}{q'}.
\end{align*}
By the relations $K < K'$, $J > J'$ and \eqref{ex16'}, we have now proved \eqref{ex14}.
\end{proof}

Returning to the proof of Lemma \ref{lem7}, we consider the second sum in \eqref{ex17'}.  Given $\boldsymbol{K} \in \fK^4 \setminus D(\fK^4)$, let $p(\boldsymbol{K}) = (p_i(\boldsymbol{K}))_{i=1}^4$ be a permutation of $\boldsymbol{K}$ such that $|\Omega(p_1(\boldsymbol{K}))|$ is maximal among $|\Omega(K_i)|$, $1 \leq i \leq 4$, and such that $|K_i - K_j| \leq 2|p_1(\boldsymbol{K}) - p_2(\boldsymbol{K})|$ for all $1 \leq i,j \leq 4$.  Then by the Cauchy--Schwarz inequality, Lemma \ref{lem8}, the separation condition on $\fK$, the fact that $q' < 2q$, and choosing $A$ sufficiently large, we get
\begin{align*}
\sum_{\boldsymbol{K} \in \fK^4 \setminus D(\fK^4)}\bigg\|\prod_{i=1}^4\fE\chi_{\Omega(K_i)_\delta}\bigg\|_\frac{q}{2}^\frac{q}{2} &\lesssim \sum_{\substack{\boldsymbol{K} \in \fK^4 \setminus D(\fK^4)\\\boldsymbol{K} = p(\boldsymbol{K})}} 2^{-c_0|p_1(\boldsymbol{K})-p_2(\boldsymbol{K})|}|\Omega(p_1(\boldsymbol{K}))|^\frac{2q}{q'}\\
&\lesssim \sum_{K_1 \in \fK} \sum_{K_2 \in \fK} |K_1 - K_2|^2 2^{-c_0|K_1 - K_2|}|\Omega(K_1)|^\frac{2q}{q'}\\
&\lesssim \delta^{\frac{c_0A}{2}}\sum_{K_1 \in \fK}|\Omega(K_1)|^\frac{2q}{q'}\\
&\lesssim \delta^2|\Omega|^\frac{2q}{q'}.
\end{align*}
\end{proof}

\section{Proof of Theorem \ref{thm1}}\label{sec5}
In this final section, we prove our main result.  We recall our setup:  For $\Omega \subseteq [-1,1]^2 \times [1,2]$ a measurable set, we have divided $\Omega$ into sets $\Omega(K)$ of constant fiber length $2^{-K}$, partitioned the indices $K$ into sets $\fK(\varepsilon)$ according to the value of $\varepsilon := \rC(\Omega(K))$, and decomposed each $\Omega(K)$ into near unions of boxes $\Omega(K)_\delta$ for $0 < \delta \lesssim \varepsilon^{1/5}$.  Thus,
\begin{align*}
\Omega = \bigcup_{0 < \varepsilon \lesssim 1}\bigcup_{0 < \delta \lesssim \varepsilon^{1/5}}\bigcup_{K \in \fK(\varepsilon)}\Omega(K)_\delta.
\end{align*}
(Actually, there may be $K$ such that $\rC(\Omega(K)) = 0$; however, those terms contribute nothing to the left-hand side below.)
\begin{proof}[Proof of Theorem \ref{thm1}]
By the triangle inequality, Lemma \ref{lem7}, Proposition \ref{prop2}, and the fact that $q' < 2q$, we have
\begin{align*}
\|\fE\chi_\Omega\|_{2q} &\leq \sum_{0 < \varepsilon \lesssim 1}\sum_{0 < \delta \lesssim \varepsilon^{1/5}}\bigg\|\sum_{K \in \fK(\varepsilon)}\fE\chi_{\Omega(K)_\delta}\bigg\|_{2q}\\
&\lesssim \sum_{0 < \varepsilon \lesssim 1}\sum_{0 < \delta \lesssim \varepsilon^{1/5}}\bigg((\log\delta^{-1})^{2q}\sum_{K \in \fK(\varepsilon)}\|\fE\chi_{\Omega(K)_\delta}\|_{2q}^{2q} + \delta|\Omega|^\frac{2q}{q'}\bigg)^\frac{1}{2q}\\
&\lesssim \Bigg[\sum_{0 < \varepsilon \lesssim 1}\sum_{0 < \delta \lesssim \varepsilon^{1/5}} (\log\delta^{-1})\delta^\frac{1}{3}\bigg(\sum_{K \in \fK(\varepsilon)}|\Omega(K)|^\frac{2q}{q'}\bigg)^\frac{1}{2q}\Bigg] + |\Omega|^\frac{1}{q'}\\
&\lesssim \Bigg[\sum_{0 < \varepsilon \lesssim 1}\sum_{0 < \delta \lesssim \varepsilon^{1/5}} (\log\delta^{-1})\delta^\frac{1}{3} |\Omega|^\frac{1}{q'}\Bigg] + |\Omega|^\frac{1}{q'}\\
&\lesssim |\Omega|^\frac{1}{q'},
\end{align*}
proving \eqref{ex1}.
\end{proof}

\end{document}